\newtheorem{theorem}{Theorem}
\newtheorem{lemma}[theorem]{Lemma}
\newtheorem{corollary}[theorem]{Corollary}
\theoremstyle{definition}
\theoremstyle{remark}
\newtheorem{remark}[theorem]{Remark}
\numberwithin{equation}{section}
\DeclareMathOperator{\conv}{conv}
\renewcommand{\epsilon}{\varepsilon}
\renewcommand{\phi}{\varphi}
\newcommand{\supp}{\mathop{\mathrm{supp}}\nolimits}
\newcommand{\su}{\mathcal{A}}
\newcommand{\R}{\mathbb{R}}
\newcommand{\Sp}{\mathbb{S}}
\title{Bisecting measures with hyperplane arrangements}
\author{Alfredo Hubard {$^\spadesuit$}}
\email{alfredo.hubard@u-pem.fr} 
\address{{$^\spadesuit$} Universit\'e Paris-Est Marne-la-Vall\'ee, LIGM. UMR 8049, CNRS, ENPC, ESIEE, UPEM, F-77454, Marne-la-Vall\'ee, France.}
\author{Roman Karasev {$^\clubsuit$}}
\email{r\_n\_karasev@mail.ru}
\urladdr{http://www.rkarasev.ru/en/}
\address{{$^\clubsuit$} Moscow Institute of Physics and Technology, Institutskiy per. 9, Dolgoprudny, Russia 141700}
\address{{$^\clubsuit$} Institute for Information Transmission Problems RAS, Bolshoy Karetny per. 19, Moscow, Russia 127994}
\thanks{{$^\clubsuit$} Supported by the Russian Foundation for Basic Research grant 18-01-00036}
\begin{document}

\begin{abstract}
We show that any $nD$ measures in $\R^n$ can be bisected by an arrangement of $D$ hyperplanes, when $n$ is a power of two.
\end{abstract}

\maketitle

\section{Introduction}

Let $\mathcal{H}=\{H_1, H_2, \ldots H_D\}$ be a finite set of hyperplanes,  $\{A_1,A_2\ldots A_D\}$ affine functions such that the zero set of $A_i$ is $H_i$, and $P^{\mathcal{H}}=A_1 A_2 \ldots A_D$ the product of these affine functions.
If $\mu$ is a measure in $\R^n$, we will say that $\mathcal{H}$ \emph{bisects} $\mu$ if 
\[
\mu\left\{v \in \R^n: P^{\mathcal{H}}(v)>0\right\}\leq \frac{\mu(\R^n)}{2}\quad\text{and}\quad \mu\left\{v \in \R^n: P^{\mathcal{H}}(v)<0\right\}\leq \frac{\mu(\R^n)}{2}.
\] 


\begin{theorem}
\label{multiham} 
Let $n$ and $D$ be integers such that $D>0$ and $n>1$ is a power of two. Given $nD$ finite measures $\mu_1,\mu_2, \ldots \mu_{nD}$ in $\R^{n}$, there exists an arrangement of at most $D$ hyperplanes that bisect each of the measures.
\end{theorem}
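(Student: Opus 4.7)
The approach I would take is the standard Configuration-Space / Test-Map scheme. First, parameterize an ordered oriented arrangement by a point $(A_1,\ldots,A_D)\in (S^n)^D$, with each $A_i$ an affine functional of unit norm whose zero set is the hyperplane $H_i$; the degenerate points where $A_i$ is a constant account for arrangements with fewer than $D$ hyperplanes, covering the ``at most $D$'' clause. Setting $P=A_1\cdots A_D$ and
\[
f_j(A_1,\ldots,A_D)=\mu_j\{P>0\}-\mu_j\{P<0\},
\]
a simultaneous bisection is precisely a common zero of the continuous map
\[
F=(f_1,\ldots,f_{nD})\colon (S^n)^D\longrightarrow \R^{nD}.
\]

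The group $(\ZZ/2)^D$ acts on the source by the antipodal map in each coordinate; flipping any single $A_i$ negates $P$, hence negates every $f_j$. Consequently $F$ is equivariant with respect to the ``total sign'' character $\eta$ of $(\ZZ/2)^D$ acting diagonally on $\R^{nD}$. A common zero is absent exactly when $F/\|F\|$ furnishes an equivariant map $(S^n)^D \to S^{nD-1}$, so the whole plan reduces to forbidding such a map when $n$ is a power of two.

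The first obstruction to examine is the equivariant Euler class $(x_1+\cdots+x_D)^{nD}$ in $H^*_{(\ZZ/2)^D}((S^n)^D;\F_2)=\F_2[x_1,\ldots,x_D]/(x_1^{n+1},\ldots,x_D^{n+1})$, but this class vanishes for every $D\geq 2$, because the multinomial coefficient $\binom{nD}{n,\ldots,n}$ is always even. The main obstacle is therefore to produce a sharper non-vanishing obstruction. My plan for overcoming it is to enlarge the equivariance to the hyperoctahedral group $B_D=(\ZZ/2)^D\rtimes S_D$ (under which $F$ is still equivariant because $P$ is symmetric in the $A_i$) and to analyze the Fadell-Husseini ideal-valued index in $H^*(BB_D;\F_2)$. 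The hypothesis $n=2^k$ should enter through a Steenrod-square or Dickson-invariant computation on the $S_D$-symmetric combinations of $x_1,\ldots,x_D$, exploiting the $2$-adic arithmetic specific to powers of two. Identifying a characteristic class built from these symmetric polynomials that survives the relations $x_i^{n+1}=0$ is the heart of the argument; once secured, the standard index comparison forces $F$ to vanish, yielding the arrangement.
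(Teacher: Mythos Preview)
Your configuration-space/test-map setup and the $B_D$-equivariance of $F$ are exactly right and match the paper. You also correctly diagnose that the $(\ZZ/2)^D$-Euler class $(x_1+\dots+x_D)^{nD}$ vanishes, so something more is needed. But the proposal stops precisely at the hard step: you write that ``identifying a characteristic class built from these symmetric polynomials that survives the relations $x_i^{n+1}=0$ is the heart of the argument,'' and then do not identify one. That is a genuine gap, not a routine computation. Two concrete difficulties you have not addressed: (i) the $B_D$-action on $(S^n)^D$ is \emph{not free} (the diagonal where $A_i=\pm A_j$ has nontrivial stabilizer), so the equivariant cohomology of the source is not simply a quotient of $\F_2[x_1,\dots,x_D]^{S_D}$ and a Fadell--Husseini comparison must cope with the contribution of this singular locus; (ii) gestures toward Steenrod squares or Dickson invariants do not by themselves explain why the hypothesis ``$n$ a power of two'' singles out a surviving class.

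The paper avoids the cohomological computation entirely and proceeds by a degree-parity argument. After reducing by approximation to ``smoothened oddly supported'' measures in general position, it deforms the $nD$ measures along a generic path to a \emph{well-separated} configuration. At the endpoint every bisecting arrangement is forced to be a union of ham-sandwich cuts for a partition of the $nD$ measures into $n$-tuples, so the number of bisecting arrangements (up to the $B_D$-symmetry) equals $N(n,D)=(nD)!/(D!\,(n!)^D)$, the number of such partitions. A short $2$-Sylow argument on $\mathfrak S_{nD}$ shows $N(n,D)$ is \emph{odd} when $n$ is a power of two; this is where the hypothesis enters. The crucial geometric point --- and the paper's substitute for your missing index computation --- is that along a generic path the zero set $f_t^{-1}(0)$ never meets the non-free locus $\Sigma$: if two hyperplanes in a bisecting arrangement coincided, at most $D-1$ distinct hyperplanes would have to meet all $nD$ support balls, impossible since a generic moment allows at most one $(n+1)$-incidence. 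With the zero set confined to the free part, a standard transversality/cobordism argument shows the mod $2$ count of $B_D$-orbits of zeros is constant in $t$, hence odd at $t=0$, hence nonempty.

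In short, your outline is compatible with the paper's framework but is missing its actual content: either the unspecified $B_D$-cohomology class, or --- what the paper supplies instead --- the parity of $N(n,D)$ together with the argument that solutions stay in the free part.
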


Observe that a family of $nD+1$ delta masses based at a set of points, no $n+1$ of which lie on the same hyperplane cannot be simultaneously bisected by less than $D+1$ hyperplanes. Barba and Schnider~\cite{Pizza} conjectured that the previous theorem holds for any $n$ and confirmed this conjecture for the case of four measures in the plane ($n=D=2$). 
Notice that the case $D=1$ of this conjecture corresponds to the classical ham sandwich theorem (see the book \cite{Matousek} for many other ham sandwhich type results).


\section{Parametrization of arrangements}

Parametrize hyperplanes in $\R^n$  by elements of $\Sp^n$ mapping $(a_0,a_1,\ldots, a_n) \in \Sp^n$ to the affine function
\[
A(x) = a_0 + a_1 x_1 + \dots + a_nx_n.
\]

Parametrize hyperplane arrangements by elements of $(\Sp^n)^D$.  An element of $(\Sp^n)^D$ corresponds to $D$ affine functions $A_1,\ldots, A_D$ and the polynomial corresponding to $\mathcal{H}=\{A_1^{1-}(0), A_2^{1-}(0), \ldots  A_D^{-1}(0)\}$  of degree $D$ is given by
\[
P^{\mathcal{H}}(x) = A_1(x) \dots A_D(x).
\]

Let $\mathfrak S_D$ be the symmetric group of permutations of $D$ elements, and $\mathbb{Z}/2^D$ be the $D$-fold product of the abelian group on two elements. Let $G=\mathfrak S_D \ltimes \mathbb{Z}/2^D$ be their semi-direct product.
 The group $\mathbb{Z}/2^D$ acts on $(\Sp^n)^D$ by the antipodal map $A\mapsto -A$ on each $\Sp^n$ factor, this action is free, the group $\mathfrak S_D$ acts on $(\Sp^n)^D$ by permuting the factors. Their semi-direct product acts by permuting and applying antipodal maps on some of the factors.

The action of $G$ on $(\Sp^n)^D$ is not free. Its non-free part $\Sigma$ corresponds to $D$-tuples $(A_1,\ldots, A_D)$ such that $A_i = A_j$ or $A_i = -A_j$ for some $i\neq j$.

\section{Approximation of measures}

We prove the theorem for a subspace of measures, which is a dense subset of $\mathcal{P}$, the space of Borel probability measures with the weak topology; then we deduce the general case by approximation. We denote by $\mathcal{P}^k$ its $k$-fold Cartesian product, whose elements are sets $\{\mu_1,\mu_2\ldots \mu_k\}$ of Borel probability measures in $\R^n$. The material that we need from measure theory is covered in many analysis books, see for instance \cite{tao2013,varadhanprobability}.

\begin{lemma}
\label{open-counterexample}
For $k,D>0$, the set of ordered $k$-tuples of Borel measures that are not bisectable by $D$ hyperplanes is open in $\mathcal{P}^k$.
\end{lemma}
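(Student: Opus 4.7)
\emph{Proof plan.} The plan is to prove the equivalent closed-set statement: the set $B_{k,D}\subset\mathcal{P}^k$ of $k$-tuples bisectable by some arrangement of $D$ hyperplanes is closed in the weak topology. So I would start with a weakly convergent sequence $(\mu_1^{(m)},\dots,\mu_k^{(m)})\to(\mu_1,\dots,\mu_k)$ in which each tuple is bisected by an arrangement $\mathcal{H}^{(m)}$, and produce a bisecting arrangement for the limit.

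Via the parametrization of Section~2, every $\mathcal{H}^{(m)}$ lives in the compact space $(\Sp^n)^D$, so after extracting a subsequence I may assume $\mathcal{H}^{(m)}\to\mathcal{H}\in(\Sp^n)^D$; continuity of $\mathcal{H}\mapsto P^{\mathcal{H}}$ as a map into continuous functions on $\R^n$ (with the compact-open topology) gives $P^{\mathcal{H}^{(m)}}\to P^{\mathcal{H}}$ uniformly on compacta. The remaining task, and the crux of the argument, is to verify that this limit $\mathcal{H}$ bisects each $\mu_j$.

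Fix $j$ and set $V=\{P^{\mathcal{H}}>0\}$, $V_m=\{P^{\mathcal{H}^{(m)}}>0\}$. The main obstacle is that the open sets $V_m$ move with $m$, so one cannot directly portmanteau the uniform bound $\mu_j^{(m)}(V_m)\le 1/2$. To get around this I would exploit Radonness of Borel probability measures on $\R^n$: by inner regularity it suffices to show $\mu_j(K)\le 1/2$ for every compact $K\subset V$. For each such $K$ I would thicken it to a compact set $K'$ with $K\subset\operatorname{int}(K')\subset K'\subset V$ (possible because $V$ is open and $K$ compact), then use uniform convergence on $K'$ together with the positivity of $P^{\mathcal{H}}$ on $K'$ to conclude $K'\subset V_m$ for all large $m$. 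This yields $\mu_j^{(m)}(\operatorname{int}(K'))\le \mu_j^{(m)}(V_m)\le 1/2$ eventually, after which the open-set half of the portmanteau theorem applied to $\operatorname{int}(K')$ gives
\[
\mu_j(K)\le \mu_j(\operatorname{int}(K'))\le \liminf_m \mu_j^{(m)}(\operatorname{int}(K'))\le \tfrac12.
\]
The symmetric half-space $\{P^{\mathcal{H}}<0\}$ is handled by the same argument applied to $-A_1$, completing the verification.
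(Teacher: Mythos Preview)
Your argument is correct. The one small point worth making explicit is that proving sequential closedness of $B_{k,D}$ suffices only because $\mathcal{P}(\R^n)$, and hence $\mathcal{P}^k$, is metrizable in the weak topology (since $\R^n$ is Polish); this is standard and is covered in the references the paper cites, but you should say it.

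Your route is genuinely different from the paper's. The paper proves openness of the non-bisectable set directly: given a non-bisectable tuple $M$, for each arrangement $\mathcal{H}$ it picks an index $i$ and a precompact open $W\subset\{\pm P^{\mathcal H}>0\}$ with $\mu_i(W)>1/2$, obtaining simultaneously a weak neighborhood $\mathcal U$ of $M$ and a neighborhood $\mathcal V$ of $\mathcal H$ on which bisection still fails; then compactness of $(\Sp^n)^D$ lets finitely many $\mathcal V$'s cover, and the intersection of the corresponding $\mathcal U$'s is the desired open neighborhood of $M$. You instead pass to the complement, extract a convergent subsequence of bisecting arrangements (again via compactness of $(\Sp^n)^D$), and transfer the $\le 1/2$ bound to the limit using inner regularity plus the open-set half of portmanteau. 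Both proofs hinge on the same compactness, but the paper's version is a finite-cover argument that works directly with the basic weak-topology neighborhoods and needs no appeal to metrizability, while yours is a limit argument that is perhaps more natural to write down and isolates clearly where Radonness and portmanteau enter.
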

\begin{proof}
Assume that $M=(\mu_1,\ldots,\mu_k) \in \mathcal{P}^{k}$ is a $k$-tuple of Borel probability measures that cannot be bisected by an arrangement of $D$ hyperplanes $\mathcal H$. For any polynomial $P^{\mathcal H}$, there exists a sign $\pm$ and an $i\in \{1\,\ldots,k\}$ such that
\[
\mu_i \left\{ \pm P^{\mathcal H} > 0 \right\} > 1/2.
\]
From continuity of the measure $\mu_i$ we can choose an open set $W$ whose closure is compact and is contained in $\left\{ \pm P^{\mathcal H} > 0 \right\}$ such that,
\[
\mu_i(W) > 1/2.
\]
By definition, the ordered $k$-tuples of Borel probability measures $M' = (\mu'_1,\ldots,\mu'_k)$ such that $\mu'_i(W)>1/2$ constitute a neighborhood $\mathcal U \ni M$ in the weak topology.
The arrangements of hyperplanes $\mathcal H'$ such that $P^{\mathcal H'}$ is positive on the closure of $W$ constitute a neighborhood $\mathcal V\ni \mathcal H$ in the topology on the space of arrangements. Any pair of $M'\in\mathcal U$ and $\mathcal H'\in\mathcal V$ have the property that $\mathcal H'$ does not bisect $M'$.

Since the space of arrangements $(\Sp^n)^D$ is compact, a finite number of such $\mathcal V_1,\ldots, \mathcal V_N$ cover the whole space of arrangements. The intersection of the respective $\mathcal U_1,\ldots,\mathcal U_N$ produce a neighborhood of $M$ every member of which cannot be bisected with any arrangement of hyperplanes. 
\end{proof}


\begin{corollary}\label{open}Theorem \ref{multiham} for Borel measures follows from its validity on any dense subset of $\mathcal{P}^k$.
\end{corollary}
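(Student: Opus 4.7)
The plan is to argue by contradiction, leveraging Lemma~\ref{open-counterexample} as the sole substantive input. Write $k=nD$ and let $\mathcal{D}\subseteq\mathcal{P}^k$ denote a dense subset on which Theorem~\ref{multiham} is assumed to hold, so every element of $\mathcal{D}$ admits a bisecting arrangement of at most $D$ hyperplanes.

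Fix an arbitrary $M\in\mathcal{P}^k$; my first step is to suppose, for contradiction, that no arrangement of $D$ hyperplanes bisects $M$. I would then invoke Lemma~\ref{open-counterexample} to produce an open neighborhood $\mathcal{W}\ni M$ in the weak topology on $\mathcal{P}^k$ every member of which fails to be bisectable by $D$ hyperplanes. The second and final step is to observe that density of $\mathcal{D}$ forces $\mathcal{W}\cap\mathcal{D}\neq\emptyset$, directly contradicting that Theorem~\ref{multiham} holds throughout $\mathcal{D}$. Hence $M$ must itself be bisectable.

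Equivalently, Lemma~\ref{open-counterexample} asserts that the set of bisectable $k$-tuples is closed in $\mathcal{P}^k$, and any closed set containing a dense subset coincides with the whole ambient space. There is no real obstacle to overcome: all the technical content, namely the semicontinuity coming from compactness of the parameter space $(\Sp^n)^D$ of arrangements and the weak continuity of $\mu\mapsto \mu(W)$ on open sets $W$ with compact closure, has already been extracted in Lemma~\ref{open-counterexample}, and this corollary is merely the formal packaging step that turns ``a dense subset suffices'' into ``all of $\mathcal{P}^k$ works''.
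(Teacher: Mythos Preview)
Your argument is correct and is exactly the intended one: the paper does not even write out a proof of this corollary, treating it as immediate from Lemma~\ref{open-counterexample}, and your contrapositive/closure formulation is precisely that immediate step.
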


Denote by  $\delta_v$ the Dirac delta mass at the point $v$, i.e. for a Borel set $X$, $\delta_{v}(X)=1$ if $v \in X$ and $\delta_v(X) = 0$ otherwise. We call measures of the form $\frac{1}{N}\sum_{k=1}^N \delta_{v_k}$ with odd $N$ and $v_1,\ldots, v_N$ in general position, \emph{oddly supported measures}. We say that a finite family of measures is in \emph{general position} if no hyperplane intersects $n+1$ connected components of the union of their supports. 

\begin{lemma}
\label{dense-discrete}
Oddly supported measures in general position are dense in $\mathcal{P}$. Ordered $k$-tuples of oddly supported measures in general position are dense in $\mathcal{P}^k$.
\end{lemma}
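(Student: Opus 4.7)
The plan is to produce, for any $\mu\in\mathcal{P}$ and any weak neighborhood $\mathcal U\ni\mu$, an oddly supported measure in general position lying inside $\mathcal U$. The statement for ordered $k$-tuples will follow from a componentwise application of the first assertion together with a simultaneous perturbation of all atoms.

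First I would reduce to compactly supported measures. By inner regularity of Borel probability measures, for any $\epsilon>0$ there is a closed ball $B_R$ with $\mu(B_R)\geq 1-\epsilon$; normalizing $\mu|_{B_R}$ produces a compactly supported approximation $\mu_R$ that tends to $\mu$ weakly as $\epsilon\to 0$. Next, to approximate a compactly supported measure by an empirical measure of the form $\frac{1}{N}\sum_{k=1}^{N}\delta_{v_k}$, I would partition $B_R$ into Borel pieces $R_1,\ldots,R_m$ of diameter at most $\epsilon$ and place $N_i=\lfloor N\mu_R(R_i)\rfloor$ atoms at distinct points inside $R_i$, distributing the remaining at most $m$ atoms arbitrarily inside $B_R$. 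A standard bounded-Lipschitz estimate gives, for every $1$-Lipschitz function $f$ with $\|f\|_\infty\leq 1$,
\[
\left|\int f\, d\mu_R-\frac{1}{N}\sum_k f(v_k)\right|\leq\epsilon+\frac{m}{N},
\]
so for $N$ large the empirical measure lies inside any prescribed weak neighborhood of $\mu_R$. If $N$ happens to be even, I would drop or add one atom and renormalize; the resulting perturbation has total variation $O(1/N)$ and we stay inside the neighborhood.

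Finally, I would perturb the atoms into general position. The set of $N$-tuples $(v_1,\ldots,v_N)\in(\R^n)^N$ in which some $n+1$ points lie on a common affine hyperplane is a finite union of proper real algebraic subvarieties, hence nowhere dense and of Lebesgue measure zero; a generic arbitrarily small perturbation of the $v_k$ lands in its complement while keeping the empirical measure inside the weak neighborhood, using continuity of $v\mapsto\delta_v$ from $\R^n$ to $\mathcal{P}$. For the $k$-tuple statement I would run the construction on each $\mu_i$ in parallel and then simultaneously perturb all the atoms so that no $n+1$ of them, taken across all the measures, lie on a common hyperplane; since each support is finite, the connected components of $\bigcup_i\supp\mu_i$ are exactly the individual points, and general position reduces to the same genericity statement. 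The main technical point is to verify that the partition-based approximation controls weak convergence uniformly enough that the later small perturbation cannot undo it; this follows from the displayed estimate combined with the fact that the bounded-Lipschitz metric metrizes the weak topology on Borel probability measures on $\R^n$.
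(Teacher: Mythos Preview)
Your argument is correct, but it follows a different route than the paper's. The paper works directly with the sub-basis of the weak topology consisting of sets $\{\mu':\mu'(U_i)>m_i,\ i=1,\dots,\ell\}$ and obtains the approximating empirical measure \emph{probabilistically}: it samples $N$ i.i.d.\ points from $\mu$ and invokes the law of large numbers to get $\nu_N(U_i)\to\mu(U_i)$ almost surely, so for large odd $N$ a random sample lands in the given neighborhood with positive probability; the perturbation to general position is then done while keeping each sample point inside every $U_i$ it already belonged to. No reduction to compact support and no metric on $\mathcal P$ are needed.

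Your approach is deterministic: you truncate to a compact ball, partition it into small pieces, and place atoms by counting, controlling the approximation in the bounded-Lipschitz metric. This buys you explicit, quantitative control and avoids any appeal to probability, at the cost of a couple of extra reductions (inner regularity, the fact that the bounded-Lipschitz metric metrizes weak convergence on $\mathcal P(\R^n)$). Both arguments handle the $k$-tuple case and the general-position perturbation the same way, by noting that the set of bad configurations of the finitely many atoms is a nowhere-dense algebraic set in $(\R^n)^{N}$. One small remark: when you say ``place $N_i$ atoms at distinct points inside $R_i$'' you should take the $R_i$ to be, say, half-open dyadic cubes so that each piece is genuinely uncountable; this is implicit but worth saying.
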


\begin{proof}
Assume the contrary, then there is a Borel probability measure $\mu$ whose weak neighborhood $\mathcal V$ contains no oddly supported measure. It is sufficient to consider $\mathcal V$ from the base of the weak topology given by a finite set of inequalities 
\[
\mu'(U_1) > m_1,\ldots, \mu'(U_\ell) > m_\ell
\]
for open $U_i$ and real $m_i$. Let $N$ be an odd number. Sample $N$ points $v_k$ independently, distributed according to $\mu$ and consider the random measure
\[
\nu_N = \frac{1}{N} \sum_{k=1}^N \delta_{v_k}.
\]
The random variable $\nu_N(U_i)$ is given by,
\[
\nu_N(U_i) = \frac{\#\{k=1,\ldots, N : v_k \in U_i\} }{N}
\]
This is a sum of $N$ independent Bernoulli random variables with expectation $\mu(U_i)$. By the law of large numbers $\nu_N(U_i)$  converges almost surely to $\mu(U_i)$. Hence for sufficiently large $N$ the probability of satisfying the inequalities $\nu_N(U_i) > m_i$ simultaneously is arbitrarily close to $1$; and we might perturb the points $v_k$ so that none of them leave any $U_i$ it belonged to so that for the perturbed measure $\nu_N(U_i)$ is still in $\mathcal V$. 

The second statement follows immediately, we do the same for $k$ measures and take a single sufficiently large odd $N$. After that we perturb the total $Nk$ support points so that none of them leaves any $U$ (from the definition of a weak neighborhood) it belonged to.
\end{proof}

Let $\eta_{v}$ be an $\epsilon$-smoothening of the delta mass at $v$. More precisely $\eta_v$ is a Borel probability measure centrally symmetric around $v$, which is supported inside a ball $B_v(\epsilon)$ of radius $\epsilon$ centered at $v$ and has a continuous density. Now take points in general position $v_1,\ldots, v_N$ and consider a sum of $\epsilon$-smoothenings
\[
\mu=\frac{1}{N} \sum_{k=1}^N \eta_{v_k}.
\] 
If $N$ is an odd number and no $n+1$ tuple of the $B_{v_k}(\epsilon)$ are intersected by a hyperplane, then we include $\mu$ in the set $\mathcal{M}_\epsilon$. Finally we put $\mathcal{M}:=\cup_{\epsilon>0} \mathcal{M}_\epsilon$, this is the set of measures we will work with.

\begin{lemma} 
\label{density} 
The set $\mathcal{M}$ is dense in the space of probability measures with the weak topology, moreover the set of ordered $k$-tuples of measures in general position in $\mathcal{M}^k$ is dense in $\mathcal{P}^k$  
\end{lemma}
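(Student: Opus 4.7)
The strategy is to chain Lemma \ref{dense-discrete} with a smoothening step. Given $\mu\in\mathcal P$ and a basic weak neighborhood $\mathcal V$ of $\mu$, I would first invoke Lemma \ref{dense-discrete} to produce an oddly supported $\nu=\frac{1}{N}\sum_{k=1}^N \delta_{v_k}$ in general position that lies in a slightly smaller weak neighborhood of $\mu$. The task then reduces to showing that the smoothened measure $\mu_\epsilon=\frac{1}{N}\sum_k \eta_{v_k}$ belongs to $\mathcal M_\epsilon$ for all sufficiently small $\epsilon$, and converges weakly to $\nu$ as $\epsilon\to 0$.

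The main technical point is producing a uniform $\epsilon_0>0$ for which, whenever $\epsilon<\epsilon_0$, no hyperplane intersects $n+1$ of the balls $B_{v_k}(\epsilon)$. For each of the finitely many $(n+1)$-element subsets $S$ of $\{v_1,\ldots,v_N\}$, I would consider
\[
f(S)=\inf_H \max_{v\in S} d(v,H),
\]
the infimum ranging over all hyperplanes $H$. General position of the $v_k$ forces $f(S)>0$: a minimizing sequence of hyperplanes, compactified via the parametrization by $\Sp^n$ used in Section 2, has a convergent subsequence whose limit would contain all of $S$ if $f(S)=0$, contradicting general position. Taking the minimum of $f(S)$ over the finitely many subsets gives the desired $\epsilon_0>0$, and for $\epsilon<\epsilon_0$ the measure $\mu_\epsilon$ lies in $\mathcal M_\epsilon$.

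For the weak convergence $\mu_\epsilon\to\nu$, I would test against any continuous bounded function $\phi$: since $\eta_{v_k}$ has total mass one and sits in $B_{v_k}(\epsilon)$, the quantity $\bigl|\int\phi\,d\eta_{v_k}-\phi(v_k)\bigr|$ is bounded by the oscillation of $\phi$ on $B_{v_k}(\epsilon)$, which tends to $0$ as $\epsilon\to 0$ by continuity of $\phi$ at $v_k$. Averaging over $k$ yields $\int\phi\,d\mu_\epsilon\to\int\phi\,d\nu$, placing $\mu_\epsilon$ inside $\mathcal V$ for all sufficiently small $\epsilon$.

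The $k$-tuple version requires no new ingredient: apply the product form of Lemma \ref{dense-discrete} to obtain $k$ oddly supported measures whose combined supports form an $Nk$-point set in general position, run the compactness argument above with the larger family of $(n+1)$-subsets across this union to obtain a single $\epsilon_0>0$ that works simultaneously, and smoothen each measure with a common $\epsilon<\epsilon_0$. The resulting $k$-tuple automatically lies in $\mathcal M^k$ and is in general position in the sense of the paper, since no hyperplane hits $n+1$ of the pairwise disjoint closed balls that form the connected components of the combined support. The only subtlety I anticipate is the compactness argument for $\epsilon_0$, which is routine given the $\Sp^n$-parametrization of hyperplanes already set up in Section 2.
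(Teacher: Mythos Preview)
Your proof is correct and follows the same approach as the paper: first invoke Lemma~\ref{dense-discrete} to approximate by an oddly supported measure, then smoothen each delta mass and let $\epsilon\to 0$. The paper's version is much terser---it simply asserts that for sufficiently small $\epsilon$ no $n+1$ balls are hit by a hyperplane and that the $\eta_{v_k}$ weakly approximate the $\delta_{v_k}$---while you supply these details via the compactness argument on $\Sp^n$ and the direct test against bounded continuous functions.
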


\begin{proof}  
For any oddly supported measure, we weakly approximate every delta mass $\delta_{v_k}$ by its respective $\eta_{v_k}$ supported in the respective $B_{v_k}(\epsilon)$. If $\epsilon$ is sufficiently small then no $n+1$ of the balls will be intersected by a single hyperplane. So $\mathcal{M}$ is dense in the space of oddly supported measures which by Lemma \ref{dense-discrete}, is dense in $\mathcal{P}$. Similarly, $\mathcal{M}^k$ is dense in $\mathcal{P}^k$. 
\end{proof}

\section{Bisecting well separated sets of measures}

We say that a family of sets $X_1,X_2\ldots X_m$ in $\R^n$ is \emph{well separated} if no $n$-tuple of their convex hulls $\conv(X_1), \conv(X_2)\ldots \conv(X_m)$ is intersected by an $(n-2)$-dimensional affine space. A family of measures is \emph{well separated} if their supports are well separated. The following lemma was shown in \cite{bhj} for absolutely continuous measures.

\begin{lemma}
\label{unique} 
For any family of well separated measures in general position $\mu_1,\mu_2, \ldots \mu_n \in \mathcal{M}$ there exists a unique hyperplane $H$ that bisects each of the measures.
\end{lemma}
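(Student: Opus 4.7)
The plan is to reduce the statement directly to the theorem of \cite{bhj}, which establishes the same conclusion under the weaker hypothesis that the measures are absolutely continuous. The only step required is to check that our measures satisfy this hypothesis. By the construction of $\mathcal{M}$, each $\mu_i$ has the form $\mu_i=\frac{1}{N_i}\sum_k \eta_{v_{i,k}}$, where every $\epsilon$-smoothing $\eta_{v_{i,k}}$ is, by definition, a probability measure with continuous density supported in the small ball $B_{v_{i,k}}(\epsilon)$. Consequently each $\mu_i$ has continuous density on $\R^n$ and is in particular absolutely continuous. The well-separated and general-position hypotheses of the present lemma coincide with those used in \cite{bhj}, so both existence and uniqueness transfer verbatim.

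For context, the strategy of \cite{bhj} proceeds along the following lines. Existence of a common bisector is the classical ham sandwich theorem for $n$ absolutely continuous measures in $\R^n$. For uniqueness, I would suppose two distinct common bisecting hyperplanes $H\neq H'$, consider their intersection $L=H\cap H'$ (an affine subspace of dimension $n-2$), and invoke the well-separated hypothesis to produce a measure $\mu_{i_0}$ whose support has convex hull disjoint from $L$. Pushing $\mu_{i_0}$ forward under the projection $\pi\colon \R^n\to \R^n/L\cong \R^2$ then yields an absolutely continuous planar measure $\bar\mu_{i_0}$ with $0\notin\conv(\supp\bar\mu_{i_0})$, together with two distinct lines $\pi(H),\pi(H')$ through the origin that both bisect it.

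The genuinely nontrivial step is the planar contradiction: using the continuity of the densities and the joint well-separated structure of the remaining measures, one derives an obstruction forcing $H=H'$. I expect this to be the main obstacle in any self-contained re-derivation, since a one-measure monotonicity argument for the signed-mass function on the pencil through $L$ does not suffice in general — the function can remain flat on an arc when only a single measure is considered. This is the technical core of \cite{bhj}, and rather than reproduce it here I would appeal to that reference.
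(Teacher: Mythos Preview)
Your reduction to \cite{bhj} is valid: every measure in $\mathcal{M}$ is by construction a finite average of smoothings with continuous density, hence absolutely continuous, and the well-separated hypothesis is exactly what \cite{bhj} uses. So citing that result closes the lemma.

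The paper, however, does not invoke \cite{bhj} as a black box; it gives a short self-contained argument that exploits the specific structure of $\mathcal{M}$. Existence is the ham sandwich theorem. For uniqueness, suppose $H$ and $H'$ are two bisectors; well-separation forces $H\cap H'$ to miss $\conv\supp\mu_i$ for some $i$, and the fact that $\mu_i$ is built from an \emph{odd} number of equal centrally symmetric pieces forces each of $H$ and $H'$ to meet the interior of $\supp\mu_i$. Since the two hyperplanes do not cross inside $\conv\supp\mu_i$, one closed half of $H$ (intersected with $\conv\supp\mu_i$) strictly contains one closed half of $H'$, and this strict containment contradicts simultaneous bisection of $\mu_i$.

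The main contrast is that you regard the pencil-through-$L$ step as the delicate core requiring the full argument of \cite{bhj}, whereas the paper shows that for measures in $\mathcal{M}$ there is no delicacy at all: the oddness of $N$ is precisely what eliminates the ``flat on an arc'' degeneracy you flag, because any bisecting hyperplane is forced to pass through the actual support, not merely its convex hull. Your route is logically sound and minimizes what you must write; the paper's route is self-contained and illuminates why $\mathcal{M}$ was defined with odd $N$ in the first place.
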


\begin{proof}
The existence of this hyperplane is provided by the ham sandwich theorem, we only need to show the uniqueness. 
Assume we have a pair of halving hyperplanes $H$ and $H'$, since the measures are well-separated, the intersection $H\cap H'$ does not touch the convex hull of the support of some $\mu_i$. The both hyperplanes must intersect the interior of the support of $\mu_i$, since it is constructed from an odd number of equal measures. Now it is clear that one of the halves $H_-\cap \conv\supp \mu_i$ and $H_+\cap \conv\supp \mu_i$ strictly contains some of $H'_-\cap \conv\supp \mu_i$ and $H'_+\cap \conv\supp \mu_i$ and therefore $H$ and $H'$ cannot equipartition $\mu_i$ at the same time.
\end{proof}

The following lemma describes the bisecting arrangements of hyperplanes in the case when the measures are well separated.

\begin{lemma}
\label{coro} 
For any family of $nD$ well separated measures in general position, an arrangement of $D$ hyperplanes $\mathcal{H}$  is bisecting, if and only if, there is bijection $\phi$ between the elements of $\mathcal{H}$ and a partition of $[nD]$ into $n$-tuples such that the hyperplane $H_i \in \mathcal{H}$ bisects the $n$-tuple of measures with indices in $\phi(H_i)$. \end{lemma}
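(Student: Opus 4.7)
My plan is a two-step argument. First, I extract the partition $\phi$ from any bisecting arrangement via a double-counting on hyperplane-ball incidences. Second, I use well-separation to reduce the bisection of each $\mu_j$ by the product polynomial $P^\mathcal{H}$ to a bisection by the single factor $A_i$ with $i=\phi(j)$. The same sign equivalence then handles the ``if'' direction, starting from a given partition with ham-sandwich hyperplanes supplied by Lemma~\ref{unique}.

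Since each $\mu_j\in\mathcal{M}$ equals $\tfrac{1}{N}\sum_k\eta_{v_k}$ with $N$ odd and each $\eta_{v_k}$ has a smooth density supported on a small ball $B_{v_k}(\epsilon)$, a bisecting arrangement $\mathcal{H}$ must have at least one hyperplane crossing at least one ball of $\supp\mu_j$: otherwise $\sign P^\mathcal{H}$ is constant on every ball and $\mu_j\{P^\mathcal{H}>0\}$ is a multiple of $1/N$, which cannot equal $\tfrac{1}{2}$ when $N$ is odd. General position caps any single hyperplane at $n$ crossed balls across the whole family, so the double count
\[
nD\;\le\;\#\{(\mu_j,H_i)\,:\,H_i\text{ crosses }\supp\mu_j\}\;\le\;Dn
\]
is a forced equality: each $\mu_j$ has a unique crossing hyperplane at a unique ball $B_{v_k}$ of its support, and each hyperplane crosses one ball in each of $n$ distinct measures. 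This is the partition $\phi$. On $B_{v_k}$ every factor $A_m$ with $m\ne i$ has constant sign, so the global bisection identity localizes to $\eta_{v_k}\{A_i>0\}=\tfrac{1}{2}$, and the central symmetry of $\eta_{v_k}$ then forces $H_i$ to pass through the center $v_k$.

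The crux of the proof is upgrading this local identity to the global $\mu_j\{A_i>0\}=\tfrac{1}{2}$: one needs $\prod_{m\ne i}A_m$ to have constant sign on all of $\supp\mu_j$, equivalently no ``foreign'' hyperplane $H_m$ with $m\ne i$ should separate two centers of $\mu_j$. Here well-separation is essential: I plan to show, by analyzing the $(n{-}2)$-flats $H_i\cap H_m$ and how they meet the convex hulls of the family, that if $H_m$ crossed $\conv(\supp\mu_j)$ without meeting any of its balls, one could produce a transversal $(n{-}2)$-flat meeting $n$ of the convex hulls, contradicting the well-separation hypothesis. I expect this sign-constancy step to be the main obstacle.

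With sign constancy in hand, $\mu_j\{P^\mathcal{H}>0\}=\tfrac{1}{2}$ is equivalent to $\mu_j\{A_i>0\}=\tfrac{1}{2}$ for each $\mu_j\in\phi(H_i)$, closing the ``only if'' direction. The ``if'' direction is symmetric: starting from any partition $\phi$ with ham-sandwich $H_i$'s (unique by Lemma~\ref{unique}), the same sign equivalence shows that the arrangement bisects every $\mu_j$.
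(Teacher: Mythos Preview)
Your overall strategy coincides with the paper's: extract the partition by an incidence count, then identify each hyperplane as the ham-sandwich cut of its $n$-tuple via Lemma~\ref{unique}. The paper is much terser and simply asserts that, because the measures are well separated, each hyperplane ``does not touch the convex hulls of the supports of the remaining measures''; you are right to single out the equivalent sign-constancy step as the crux, and right to be worried about it.

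Your proposed resolution through the $(n{-}2)$-flats $H_i\cap H_m$ does not work. Well-separation only forbids an $(n{-}2)$-flat from meeting $n$ of the convex hulls; it does not stop a hyperplane from meeting $n{+}1$ of them, and the intersection of two offending hyperplanes need not lie in any hull. Concretely, in the plane ($n=D=2$) take $\mu_1,\mu_2$ to be single smoothed deltas at $(0.1,\pm 50)$, give $\mu_3$ three balls near $(-10,0),(0,0),(10,0)$ and $\mu_4$ three balls near $(0,100),(1,100),(2,100)$, all perturbed into general position. The four convex hulls are pairwise disjoint, so the family is well separated. The ham-sandwich line $H_1=\{x=0.1\}$ for $\{\mu_1,\mu_2\}$ separates the balls of $\mu_3$, and one checks $\mu_3\{A_1A_2>0\}=5/6$: the arrangement coming from the partition $\{1,2\},\{3,4\}$ does \emph{not} bisect. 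Moreover $H_1\cap H_2$ is a single point near $(0.1,10)$, lying in none of the four convex hulls, so no forbidden $0$-flat appears. This shows both that your plan cannot succeed and that the lemma, as literally stated, is false.

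The paper only applies the lemma at time $t=1$, where each $\supp\mu_{i,1}$ sits inside a small ball $B(b_i,\alpha)$ with the centres $b_i$ themselves in general position; for small $\alpha$ no hyperplane meets $n{+}1$ of these balls, hence no hyperplane meets $n{+}1$ of the convex hulls $\conv(\supp\mu_{i,1})$, and your sign-constancy step is then immediate. The right repair is therefore not an argument with $H_i\cap H_m$ but a strengthened hypothesis: assume that no hyperplane meets $n{+}1$ of the sets $\conv(\supp\mu_j)$.
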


\begin{proof}
Given a partition $Y$ of $[nD]$ into $n$-tuples  $\{Y_1,Y_2\ldots Y_D\}$. By Lemma \ref{unique}, for each $n$-tuple $Y_i$, the corresponding measures are bisected by a unique ham sandwich cut, this defines a bijection $\phi^{-1}\colon Y \to \mathcal{H}$. Since the measures are well separated, any measure with index not in $\phi(H)$ is not intersected by $H$. So the arrangement is simultaneously bisecting. Conversely, since the supports are well separated, each hyperplane of a bisecting arrangement must intersect the supports of precisely $n$ of the measures, otherwise at least one measure cannot be bisected. In this situation each hyperplane bisects $n$ measures and does not touch the convex hulls of the supports the remaining measures. By Lemma \ref{unique}, such a hyperplane must be the unique ham sandwich cut of the corresponding $n$-tuple of measures.
\end{proof}

Let $N(n,D)$ be the number of unordered partitions of a set of $nD$ elements into $D$ sets of $n$ elements each. Clearly
\[
N(n,D) = \frac{(nD)!}{D!(n!)^D},
\]
but we will not use this formula.

\begin{lemma}
\label{parity} 
If $n$ is a power of two then $N(n,D)$ is odd.
\end{lemma}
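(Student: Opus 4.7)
The plan is to compute the $2$-adic valuation $v_2(N(n,D))$ directly from the closed form $N(n,D) = (nD)!/(D!\,(n!)^D)$, and show that it vanishes precisely because $n$ is a power of two.

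The tool is Legendre's formula: for any nonnegative integer $m$,
\[
v_2(m!) = m - s_2(m),
\]
where $s_2(m)$ denotes the sum of binary digits of $m$. Applying it to numerator and denominator gives
\[
v_2(N(n,D)) = \bigl(nD - s_2(nD)\bigr) - \bigl(D - s_2(D)\bigr) - D\bigl(n - s_2(n)\bigr) = s_2(D) - s_2(nD) + D\bigl(s_2(n) - 1\bigr).
\]
When $n = 2^k$ is a power of two, we have $s_2(n) = 1$, so the last summand disappears; and $s_2(nD) = s_2(2^k D) = s_2(D)$, because multiplying by $2^k$ is a left shift of binary digits and does not change their sum. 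Therefore $v_2(N(n,D)) = 0$, and $N(n,D)$ is odd.

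There is essentially no obstacle once one recognizes the relevance of binary digit sums; the algebra then collapses at once, and the displayed identity also makes clear why the hypothesis on $n$ is needed, since the term $D(s_2(n)-1)$ is nontrivial precisely when $n$ is not a power of two. If a more combinatorial phrasing were preferred, one could instead use the recurrence $N(n,D) = \binom{nD-1}{n-1}\,N(n,D-1)$ and Lucas' theorem: for $n = 2^k$ the binary expansion of $n-1 = 2^k-1$ is contained in that of $nD-1$, whose lowest $k$ bits are all $1$'s, so every such binomial coefficient is odd, and induction on $D$ concludes.
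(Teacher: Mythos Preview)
Your proof is correct and takes a genuinely different route from the paper's. The paper argues by letting a $2$-Sylow subgroup $S\subset\mathfrak S_{nD}$ act on the set of unordered partitions of $[nD]$ into $n$-tuples: when $n$ is a power of two, the structure of $S$ (built from a binary tree) forces exactly one partition to be fixed, so all other partitions lie in orbits of even size and the total count is odd. Your argument instead computes the $2$-adic valuation of the closed form $(nD)!/(D!\,(n!)^D)$ directly via Legendre's formula $v_2(m!)=m-s_2(m)$, reducing the claim to the trivial observation that multiplying by $2^k$ preserves binary digit sums.

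Your approach is shorter and more elementary; it also makes the role of the hypothesis transparent, since the residual term $D(s_2(n)-1)$ is visibly nonzero exactly when $n$ is not a power of two. The paper's fixed-point argument, while less direct, is in keeping with the equivariant flavor of the surrounding proof and avoids invoking the explicit formula for $N(n,D)$ (which the paper in fact disclaims using). Your Lucas-theorem alternative is likewise correct and amounts to the same digit-sum bookkeeping packaged recursively.
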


\begin{proof}
Consider the action of the $2$-Sylow subgroup $S\subset\mathfrak{S}_{nD}$ on these partitions. To describe this Sylow subgroup we need to make a binary tree with $2^m$ leaves, where $2^m$ is the smallest power of two not smaller than $nD$. Then we drop the leaves that have numbers strictly greater than $nD$ and drop the corresponding higher vertices of the tree. Then $S$ is the symmetry group of the remaining tree and its embedding into $\mathfrak{S}_{nD}$ is obtained by looking at the leaves of the tree and how they are permuted by $S$. It is a Sylow subgroup just because by construction its order equals
\[
2^{\sum_{k\ge 1} \lfloor nD/2^k \rfloor},
\]
which is the largest power of two that divides $(nD)! = |\mathfrak S_{nD}|$.

The set $nD$ has a decomposition into consecutive $n$-tuples $P_1\cup\dots\cup P_D$. As it is easily seen, when $n$ is a power of two, each $P_i$ corresponds to a full binary subtree. Hence the group $S$ can permute transitively each of $P_i$ while fixing all elements of the other $P_j$, $j\neq i$. This guarantees that an unordered partition into $n$-tuples that is fixed by $S$ must coincide with the chosen partition $P_1\cup\dots\cup P_D$. Other partitions are not fixed under the $S$ action, so they come in orbits. Since $S$ is a $2$-group, all such orbits are even, hence the total number of partitions into $n$-tuples is odd.
\end{proof}

\section{Proof of the Theorem}

By Lemmas \ref{density} and Corollary \ref{open} it is sufficient to prove the theorem for measures in $\mathcal{M}$ (smoothened oddly supported measures) in general position. Denote by $\su_i$ the support of the measure $\mu_i$ and by $C_i$ the set of centers of balls whose union is $\su_i$. We say that the points on $\su_i$ are of \emph{color} $i$. Denote by $M$ the family $\{\mu_1,\mu_2,\ldots, \mu_{nD}\}$.
Arguing similarly to Lemma~\ref{unique} observe that for any family of measures in general position in $\mathcal{M}$ (not necessarily well-separated) a bisecting arrangement has to be the union of $D$ hyperplanes each of which intersects a heterochromatic set of $n$ connected components, otherwise some of the measures will not be bisected. We only need to count such arrangements.

We deform the measures $\mu_i$ continuously to a situation where we can easily count the number of bisecting arrangements of the family. We use measures in $\mathcal{M}$ throughout, so we might prescribe a trajectory of $C_i$ and choose $\epsilon>0$ later. In the following all the objects that we deal with depend on $t \in [0,1]$ which we call time, and denote this time with a subindex $t$. For each $t\in [0,1]$ we consider a measure $\mu_{i,t} \in \mathcal{M}$ that depends continuously on $t$ such that $\mu_{i,0}:=\mu_i$ and the family $M_1:=\{\mu_{1,1},\mu_{2,1},\ldots \mu_{nD,1}\}$ (at time $t=1$) is well separated and in general position. By Lemma \ref{coro} we know that the the family $M_1$ has exactly $N(n,D)$ bisecting arrangements.  

Let us further describe the motion of $M_t$ in more detail. We want to describe a \emph{generic trajectory} of measures in general position. Consider a point $b_i$ from a general position set $b_1,b_2, \ldots b_{nD}$. Choose $\alpha>0$ so that the balls $B(b_i, \alpha)$ are well separated. Then move each of the points of $C_i$ towards $b_i$ in such a way that each set $C_I$ is always in general position within itself and at the end, the support of the $\mu_{i,1}$ is contained in $B(b_i, \alpha)$. For example, the deformation could follow a homothety with center $b_i$. By perturbing the speed of the trajectories if necessary, we can assume that at no moment of time there exist two $(n+1)$-tuples of connected components of the $\su_i$, each of which is intersected by a hyperplane. In particular, at no time $t$, an $(n+2)$-tuple of connected components is intersected by a single hyperplane.  To put it short, in a generic trajectory the events when some $n+1$ supporting balls of the measures can be intersected by a hyperplane come one by one.

Denote by $Z_t$ the subset of points of $(\Sp^n)^D$ corresponding to bisecting arrangements of the family $M_t$. Our \emph{crucial observation} is that $Z_t$ does not touch the non-free part $\Sigma\subset (\Sp^n)^D$. An assumed $G$-fixed point of $Z_t$ corresponds to a set of hyperplanes in which two of the hyperplanes coincide. From the assumption on the generic trajectory it follows that we thus have at most $D-1$ distinct hyperplanes that intersect at least $nD$ supporting balls of the measures in the set $M_t$. But there is a unique $(n+1)$-tuple of such balls that can be intersected by a single hyperplane, in all other situations the hyperplanes intersect at most $n$ balls each. The inequality $n(D-1)+1 < nD$ thus gives a contradiction, so the non-free part of the space of arrangements is not touched during the motion.

Let us show that the parity of the number of bisecting arrangements stays invariant during the motion; then Lemma \ref{parity} delivers the result in the case we are interested in.

Consider the continuous $G$-equivariant map $f \colon (\Sp^n)^D \times [0,1] \to \R^{nD}$ given by 
\[
(f_t(x))_{i}=\mu_{i,t}\{P>0\}-\mu_{i,t}\{P<0\},
\] 
where $P$ is the polynomial we associate to $x \in (\Sp^n)^D$. We have the solution set $Z_t=f_t^{-1}(0) \subset (\Sp^n)^D\setminus \Sigma$ at time $t$. We need to show that $Z_0\neq \emptyset$ and let us assume the contrary, that $Z_0 = \emptyset$. 

If fact, the union of all such $Z_t$ for $t\in [0,1]$ is the preimage of zero $Z = f^{-1}(0)$, a closed subset of the product $(\Sp^n)^D \times [0,1]$, not touching the non-free part of this product $\Sigma\times [0,1]$. Denote the free part 
\[
F = \left( (\Sp^n)^D\setminus \Sigma\right) \times [0,1]
\]
for brevity. Using the Thom transversality theorem \cite{Thom1954, milnor1997} (on the free part $F$ we just apply the non-equivariant transversality for the sections of the vector bundle $F\times_G \mathbb R^{nD}$ over $F/G$) we modify $f$ in a neighborhood of $Z$ (not touching $\Sigma$) to produce a smooth $G$-equivariant map $f'$ transversal to zero, thus having $Z' = f'^{-1}(0)$ a submanifold with boundary in $F$. We can also choose $f'$ coniciding with $f$ for $t=0,1$ since for $t=0,1$ the map $f$ was already transversal to zero. Now we have $Z'$ with $Z'_0 = Z_0 = \emptyset$ and such that $Z'_1$ consists of an odd number of $G$-orbits. But $Z'_1$ is the boundary of the one-dimensional compact manifold $Z'$ with free action of $G$ that cannot consist of an odd number of $G$-orbits, a contradiction.

\begin{remark} 
The previous version of this paper incorrectly claimed Theorem \ref{multiham} for any $n$. It was claimed that the cohomology class
that was denoted there by $e_i$ vanished on the complement of the set of arrangements of $D$ hyperplanes bisecting a single measure. Actually the argument given there with the curve $\gamma_i$ provides this fact for the class $\sum_{i=1}^D e_i$, the modulo two Euler class of the one-dimensional representation of $(\mathbb Z/2)^D$, on which each generator of every $\mathbb Z/2$ acts antipodally. 
The vanishing lemma implies that if $(e_1 + \dots + e_D)^k$ is nonzero in the cohomology ring of the product of projective spaces, then for every $k$ measures there exist an arrangement of $D$ hyperplanes bisecting the measures. 
This in turn amounts to finding an odd multinomial coefficient,
$\binom{k}{k_1\ k_2\ \dots\ k_D} = \binom{k}{k_1}\cdot\binom{k-k_1}{k_2}\dots\binom{k - k_1 - \dots - k_{D-1}}{k_D}$
with $k_1,\ldots,k_d \le n$. For such a coefficient to be odd, when we add the numbers in the sum $k_1 + \dots + k_D$ in binary representation then no carry should occur. Consider the largest $m$ such that, $2^m \le n$, then, we need 
$
k_1 + \dots + k_D \le 2^{m+1} - 1.
$
There is an example of such a sum with no carry if we put for $D\le m+2$,
$
k_1 = 2^m, k_2 = 2^{m-1}, \ldots, k_{D-1} = 2^{m-D+2}, k_D =  2^{m-D+2} - 1,
$
and for $D\ge m+2$,
$
k_1 = 2^m, k_2 = 2^{m-1}, \ldots, k_m = 2, k_{m+1} = 1, k_{m+2} = \dots = k_D = 0.
$
From which we can conclude that we can bisect $2^{m+1}-1\le 2n-1$ measures with at most $2$ hyperplanes, and taking more hyperplanes, does not yield anything new with this technique (not using the permutations $\mathfrak S_D$).

On the other hand, if $2^m\le n$ and we have $2^m D$ measures in $\R^n$, we can project linearly to $\mathbb R^{2^m}$, apply Theorem \ref{multiham} to obtain a bisecting arrangement of $D$  hyperplanes in $\R^{2^m}$ and look at their inverse image, an arrangement of $D$ hyperplanes in $\R^n$ that bisects the original measures. Since $2^m D > 2^{m+1} - 1$ in the nontrivial case $D\ge 2$, Theorem \ref{multiham} always provides a better result then the above cohomological argument.
\end{remark}

\subsubsection*{Acknowledgements.}
We acknowledge and thank Pavle V. M. Blagojevi\'{c} for corrections and useful comments.

\bibliographystyle{plain}
\bibliography{main}
\vfill

%
%


\end{document}